\theoremstyle{plain}
\newtheorem{theorem}{Theorem}
\newtheorem*{intro-theorem}{Theorem}
\newtheorem*{intro-corollary}{Corollary}
\newtheorem*{lemma}{Lemma}
\newtheorem*{proposition}{Proposition}
\theoremstyle{definition}
\theoremstyle{remark}
\newtheorem*{remark}{Remark}
\newcommand{\Z}{\mathbb{Z}}
\newcommand{\Q}{\mathbb{Q}}
\newcommand{\R}{\mathbb{R}}
\newcommand{\C}{\mathbb{C}}
\newcommand{\bd}{\partial}
\renewcommand{\char}{\text{char}}
\newcommand{\mc}[1]{\mathcal{#1}}
\newcommand{\Hom}{\text{Hom}}
\newcommand{\im}{\text{im}}
\def\blfootnote{\xdef\@thefnmark{}\@footnotetext}
\begin{document}

\title{{K}-{Witt} bordism in characteristic 2}
\author{Greg Friedman\footnote{This work was partially supported by a grant from the Simons Foundation (\#209127 to Greg Friedman)}}

\date{August 13, 2012}

\maketitle
%typeset=\today
%\tableofcontents
\begin{abstract}
This note provides a computation of the bordism groups of $K$-Witt spaces for fields $K$ with characteristic $2$. We provide a complete computation for the unoriented bordism groups. For the oriented bordism groups, a nearly complete computation is provided as well a discussion of the difficulty of resolving a remaining ambiguity in dimensions equivalent to $2\mod 4$. This corrects an error in the $char(K)=2$ case of the author's prior computation of the bordism groups of $K$-Witt spaces for an arbitrary  field $K$. 
\end{abstract}
\blfootnote{\noindent\textbf{2000 Mathematics Subject Classification: }  55N33, 57Q20, 57N80 

\textbf{Keywords:} intersection homology,  Witt bordism, Witt space}

In \cite{GBF21}, an $n$-dimensional $K$-Witt space, for a field $K$, is defined\footnote{There is a minor error in \cite{GBF21} in that Witt spaces are stated to be irreducible, meaning that there is only a single top dimensional stratum. In general, this should not be part of the definition of a $K$-Witt space; cf. \cite{Si83}. However, as every $K$-Witt space of dimension $>0$ is bordant to an irreducible $K$-Witt space (see \cite[page 1099]{Si83}), this error does not affect the bordism group computations of \cite{GBF21}. It is not true that every $0$-dimensional $K$-Witt space is bordant to an irreducible $K$-Witt space, but in this dimension the computations all reduce to the manifold theory and the computations given for this dimension in \cite{GBF21} are also correct if one removes irreducibility from the definition.}  to be  an oriented compact  $n$-dimensional PL stratified pseudomanifold $X$ satisfying the $K$-Witt condition that the lower-middle perversity intersection homology group 
$I^{\bar m}H_k(L;K)$ is $0$ for each link $L^{2k}$ of each stratum of $X$ of dimension $n-2k-1$, $k>0$. Following the definition of stratified pseudomanifold in \cite{GM1}, $X$ does not possess codimension one strata. Orientability is determined by the orientability of the top (regular) strata. This definition generalizes Siegel's  definition in \cite{Si83} of $\Q$-Witt spaces (called there simply ``Witt spaces''). The   motivation for this definition is that such spaces possess intersection homology Poincar\'e duality $I^{\bar m}H_i(X;K)\cong \Hom(I^{\bar m}H_{n-i}(X;K),K)$. 

The author's paper \cite{GBF21} concerns $K$-Witt spaces and, in particular, a computation of the bordism theory $\Omega_*^{K-\text{Witt}}$ of such spaces. However, there is an error in \cite{GBF21} in the computation of the coefficient groups $\Omega_{4k+2}^{K-\text{Witt}}$ when $\char(K)=2$.

It is claimed in \cite{GBF21} that $\Omega_{4k+2}^{K-\text{Witt}}=0$. When $\char(K)> 2$, the null-bordism of a $4k+2$ dimensional $K$-Witt space $X$ is established in \cite{GBF21} by following Siegel's computation \cite{Si83} for $\Q$-Witt spaces by first performing a surgery to make the space irreducible and then
performing a sequence of singular surgeries  to obtain a space $X'$ such that  $I^{\bar m}H_{2k+1}(X';K)=0$. The $K$-Witt null-bordism of $X$ is the union of the trace of the surgeries from $X$ to $X'$ with the closed cone $\bar cX'$. One performs the singular surgeries on elements  $[z]\in I^{\bar m}H_{2k+1}(X;K)$ such that $[z]\cdot [z]=0$, where $\cdot$ denotes the Goresky-MacPherson intersection product
\begin{comment}\footnote{Technically, Siegel assumes for this argument that $X$ is irreducible, but as noted the proof of Proposition 1.1 of \cite{Si83}, every $X$ is Witt bordant to an irreducible Witt space using bordisms that do affect the singularities.} 
\end{comment}
\cite{GM1}. As the intersection product is skew symmetric on $I^{\bar m}H_{2k+1}(X;K)$, such a $[z]$ always exists. The error in \cite{GBF21} stems from overlooking that this last fact is not necessarily true in characteristic $2$, where skew symmetric forms and symmetric forms are the same thing and so skew-symmetry does not imply $[z]\cdot [z]=0$.

\paragraph{Corrected computations.}
To begin to remedy the error of \cite{GBF21}, we first observe that it remains true in characteristic $2$ that the map\footnote{Recall from \cite[Corollary 4.3]{GBF21} that the bordism groups depend only on the characteristic of the field, so for characteristic $2$ it suffices to consider $K=\Z_2$.}  $w:\Omega_{4k+2}^{\Z_2-\text{Witt}}\to W(\Z_2)$ is injective, where $W(\Z_2)$ is the Witt group of $\Z_2$ and $w$ takes the bordism class $[X]$ to the class of the intersection form on $I^{\bar m}H_{2k+1}(X;\Z_2)$. For $k>0$, this fact can be proven   as it is proven for $w:\Omega_{4j}^{K-\text{Witt}}\to W(K)$, $j>0$, in \cite{GBF21}: if one assumes that the intersection form on $X$ represents $0$ in $W(\Z_2)$ then  the intersection form is split, in the language of \cite{MH}; see \cite[Corollary III.1.6]{MH}. And so $I^{\bar m}H_{2k+1}(X;\Z_2)$ will possess an isotropic (self-annihilating) element by \cite[Lemma I.6.3]{MH}. The surgery argument can then proceed\footnotemark.   As  $W(\Z_2)\cong \Z_2$ (see \cite[Lemma IV.1.5]{MH}), it follows that $\Omega_{4k+2}^{\Z_2-\text{Witt}}$ is either $0$ or $\Z_2$.

\footnotetext{There is one other possible complication due to characteristic $2$ that must be checked but that does not provide difficulty in the end: For characteristic not equal to $2$, every split form is isomorphic to an orthogonal  sum of hyperbolic planes \cite[Lemma I.6.3]{MH}, and this appears to be used in the proof of Theorem 4.4 of \cite{Si83}, which is heavily referenced in \cite{GBF21}. For characteristic $2$, one can only conclude that a split form is isomorphic to one with matrix $\begin{pmatrix}0&I\\I&A\end{pmatrix}$ for some matrix $A$. However, a detailed reading of the proof of  \cite[Theorem 4.4, particularly page 1097]{Si83} reveals that it is sufficient to have a basis $\{\alpha,\beta,\gamma_1,\ldots,\gamma_{2m}\}$ such that  $\alpha\cdot \alpha=\alpha\cdot \gamma_i=0$ for all $i$ and $\alpha\cdot\beta=1$, and this is certainly provided by a form with the given matrix.}

This argument does not hold for $4k+2=2$ as in this case the dimensions are not sufficient to guarantee that every middle-dimensional intersection homology class is representable by an irreducible element, which is necessary for the surgery argument; see \cite[Lemma 2.2]{Si83}. However, all $2$-dimensional Witt spaces must have at worst isolated singularities, and so in particular such a space must have the form $X\cong (\amalg S_i)/\sim$, where the $S_i$ 
are closed oriented surfaces and the relation $\sim$ glues them together along various isolated points.  But then $X$ is bordant to $\amalg S_i$. This can be seen via a sequence of pinch bordisms as defined by Siegel \cite[Section II]{Si83} that pinch together the regular neighborhoods of sets of points of $\amalg S_i$. To see that the bordism is via a Witt space, it is only necessary to observe that the link of the interior cone point in each such pinch bordism will be a wedge of $S^2$s, and it is easy to compute that $I^{\bar m}H_1(\vee_i S^2;K)=0$ for any $K$. But now, since all closed oriented\footnote{Recall that $\Z_2$-Witt spaces are assumed to be $\Z$-oriented, though see below for more  on orientation considerations} surfaces bound, 
 $\Omega_2^{\Z_2-\text{Witt}}=0$. This special case was also over-looked in \cite{GBF21}, though this argument holds for any field $K$ and is consistent with the claim of \cite{GBF21} that $\Omega_{2}^{K-\text{Witt}}=0$ for all $K$.

Thus we have shown that  $w: \Omega_{4k+2}^{\Z_2-\text{Witt}}\to W(\Z_2)\cong \Z_2$ is an injection for $k\geq 0$, trivially so for $k=0$. Unfortunately, the question  of surjectivity of $w$ in dimensions $4k+2$ is more complicated and not yet fully resolved. We can, however, make the following observation: 
 if $X$ is a $\Z_2$-Witt space of dimension $4k-2$, then\footnote{Recall that the K\"unneth theorem holds within a single perversity when one term is a manifold, so we can compute the intersection forms of such product spaces in the usual way; see e.g. \cite{Ki}.} $w([X\times \C P^2])=w([X])$. So if there is a non-trivial element of $\Omega_{4k-2}^{\Z_2-\text{Witt}}$, then there is a non-trivial element of $\Omega_{4k+2}^{\Z_2-\text{Witt}}$. 

Putting this together with the computations from \cite{GBF21} of $\Omega_{*}^{K-\text{Witt}}$ in dimension $\not\equiv 4k+2\mod 4$   (which remain correct), we have the following theorem:
\begin{comment}
To  it remains true, as proven in \cite{GBF21}, that $\Omega_{*}^{K-\text{Witt}}=\Omega_{*}^{\Z_2-\text{Witt}}$, that $\Omega_{n}^{\Z_2-\text{Witt}}=0$ for $n$ odd, that $\Omega_{0}^{K-\text{Witt}}=\Z$, and that $\Omega_{4k}^{K-\text{Witt}}\cong W(K)\cong \Z_2$, $k>0$, generated by $\C P^{2k}$.
\end{comment}

\begin{theorem}\label{T}
For a field $K$ with $\char(K)=2$, $\Omega_{*}^{K-\text{Witt}}=\Omega_{*}^{\Z_2-\text{Witt}}$, and for\footnote{Since these are geometric bordism groups, they vanish in negative degree.} $k\geq 0$,
\begin{enumerate}
\item $\Omega_{0}^{K-\text{Witt}}\cong\Z$,
\item for $k>0$, $\Omega_{4k}^{K-\text{Witt}}\cong \Z_2$, generated by $[\C P^{2k}]$,
\item $\Omega_{4k+3}^{K-\text{Witt}}=\Omega_{4k+1}^{K-\text{Witt}}=0$,
\item \label{I} 
  Either
\begin{enumerate}
\item $\Omega_{4k+2}^{K-\text{Witt}}=0$ for all $k$, or
\item there exists some $N>0$ such that $\Omega_{4k+2}^{K-\text{Witt}}=0$ for all $k<N$ and $\Omega_{4k+2}^{K-\text{Witt}}\cong \Z_2$ for all $k\geq N$. 
\end{enumerate}
\end{enumerate}
\end{theorem}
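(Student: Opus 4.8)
The plan is to assemble the theorem from the pieces already developed in the excerpt, treating each claim as a consequence of a stated fact together with the bordism computations of \cite{GBF21} that remain valid. First I would invoke \cite[Corollary 4.3]{GBF21} to reduce to $K=\Z_2$, giving the equality $\Omega_*^{K-\text{Witt}}=\Omega_*^{\Z_2-\text{Witt}}$ and justifying the first assertion. Claims (1), (2), and (3) are then exactly the computations of \cite{GBF21} in dimensions $\not\equiv 2 \bmod 4$, which the author has emphasized remain correct; I would simply cite them, noting that $\Omega_0^{K-\text{Witt}}\cong\Z$ reduces to the oriented manifold bordism group in dimension $0$, that $\Omega_{4k}^{K-\text{Witt}}\cong W(\Z_2)\cong\Z_2$ is generated by $[\C P^{2k}]$ via the intersection-form signature-type invariant, and that the odd-dimensional groups vanish.

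The substance lies in part (\ref{I}), and here I would organize the argument around the two facts established just above the theorem statement. The first is that $w:\Omega_{4k+2}^{\Z_2-\text{Witt}}\to W(\Z_2)\cong\Z_2$ is injective for every $k\geq 0$; this forces each group $\Omega_{4k+2}^{\Z_2-\text{Witt}}$ to be either $0$ or $\Z_2$, so the only question is \emph{which} dimensions are nonzero. The second is the multiplicative relation $w([X\times\C P^2])=w([X])$ for a $\Z_2$-Witt space $X$ of dimension $4k-2$, which shows that crossing with $\C P^2$ induces a map $\Omega_{4k-2}^{\Z_2-\text{Witt}}\to\Omega_{4k+2}^{\Z_2-\text{Witt}}$ compatible with $w$, so a nonzero class in dimension $4k-2$ produces a nonzero class in dimension $4k+2$. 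The key structural consequence is a monotonicity statement: once $\Omega_{4k+2}^{\Z_2-\text{Witt}}\cong\Z_2$ for some $k$, the same holds for every larger $k$.

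I would then phrase the dichotomy as a clean case split on the set $S=\{k\geq 0 : \Omega_{4k+2}^{\Z_2-\text{Witt}}\neq 0\}$. By injectivity of $w$, every element of $S$ corresponds to a group isomorphic to $\Z_2$, and every index not in $S$ gives the zero group. By the product-with-$\C P^2$ monotonicity, $S$ is upward closed: if $k\in S$ then $k+1\in S$. An upward-closed subset of the nonnegative integers is either empty or of the form $\{k : k\geq N\}$ for some $N\geq 0$; since the $k=0$ case is the already-established vanishing $\Omega_2^{\Z_2-\text{Witt}}=0$, we have $0\notin S$, so $N>0$ whenever $S$ is nonempty. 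The two resulting cases $S=\varnothing$ and $S=\{k:k\geq N\}$ are precisely alternatives (a) and (b), completing the proof.

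The main obstacle is not in proving the theorem as stated but is exactly the ambiguity the theorem records: determining surjectivity of $w$, equivalently deciding whether the set $S$ is empty or has a genuine threshold $N$. The injectivity and monotonicity I would use are already in hand, so the dichotomy follows formally; what remains genuinely hard, and is left unresolved, is constructing a $\Z_2$-Witt space whose middle-dimensional intersection form is nondegenerate and odd (hence nonsplit), which would pin down $N$ and distinguish alternative (b) from (a). I expect that writing the proof itself will be routine bookkeeping, with all the difficulty concentrated in that unaddressed existence question.
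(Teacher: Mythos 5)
Your proposal is correct and follows essentially the same route as the paper, which assembles the theorem from exactly the ingredients you cite: the reduction to $K=\Z_2$, the surviving computations of \cite{GBF21} in dimensions $\not\equiv 2\bmod 4$, the injectivity of $w$ (with the separate pinch-bordism argument giving $\Omega_2^{\Z_2-\text{Witt}}=0$, whence $N>0$), and the relation $w([X\times \C P^2])=w([X])$ giving upward propagation of nonvanishing. Your explicit formalization via the upward-closed set $S$ is a slightly tidier packaging of the dichotomy than the paper's prose, but the mathematical content is identical.
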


We will provide below some further discussion of the difficulties of deciding which case of \eqref{I} holds after discussing unoriented bordism.

\begin{remark}
Independent of the existence or value of $N$ in condition \eqref{I} of the theorem, the computations from \cite[Section 4.5]{GBF21} of $\Omega^{K-\text{Witt}}_*(\,\cdot\,)$ as a generalized homology theory on CW complexes  continue to hold and to imply that for $\char(K)=2$, $$\Omega^{K-\text{Witt}}_n(X)=\Omega^{\Z_2-\text{Witt}}_n(X)\cong \bigoplus_{r+s=n}H_r(X;\Omega^{\Z_2-\text{Witt}}_s).$$ 
\end{remark}

\paragraph{Unoriented bordism.}
Given the motivation to recognize spaces that possess a form of Poincar\'e duality, it seems reasonable to consider $K$-Witt spaces that are \emph{$K$-oriented}. This has no effect when $\char(K)\neq 2$, in which case $K$-orientability is equivalent to $\Z$-orientability as considered in \cite{GBF21}. But when $\char(K)=2$, all pseudomanifolds are $\Z_2$-orientable, which is equivalent to being $K$ orientable, and 
the Poincar\'e duality isomorphism $I^{\bar m}H_k(X;K)\cong \Hom(I^{\bar m}H_{n-k}(X;K),K)$ holds for all such compact pseudomanifolds satisfying the $K$-Witt condition.

If we allow $K$-Witt spaces and $K$-Witt bordism using $K$-orientations, then for  $\char(K)=2$ we are essentially talking about unoriented bordism\footnotemark, so to clarify the notation, let us denote the resulting bordism groups by $\mc N_*^{K-\text{Witt}}$. These groups can be computed as follows:

\footnotetext{One could also define unoriented bordism groups of unoriented compact  PL pseudomanifolds satisfying the $K$-Witt condition with $\char(K)\neq 2$, but it is not clear how to study such groups by the present techniques, as there is no reason to expect that $I^{\bar m}H_*(X;K)$  would satisfy Poincar\'e duality for such a space $X$. }

\begin{theorem}
For a field $K$ with $\char(K)=2$ and for $i\geq 0$,
\begin{equation*}
\mc N_i^{K-\text{Witt}}\cong
\begin{cases}
\Z_2, &i\equiv 0\mod 2,\\
0,  &i\equiv 1\mod 2.
\end{cases}
\end{equation*}
\end{theorem}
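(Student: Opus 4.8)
The plan is to analyze the symmetric intersection form in the middle dimension, exactly as in the oriented case, but now exploiting the larger supply of generators available once $\Z$-orientations are dropped. For a $2m$-dimensional $K$-Witt space $X$ with $\char(K)=2$, Poincar\'e duality makes the pairing $I^{\bar m}H_m(X;\Z_2)\otimes I^{\bar m}H_m(X;\Z_2)\to\Z_2$ a nondegenerate symmetric form, and I would define $w\colon \mc N_{2m}^{K-\text{Witt}}\to W(\Z_2)$ by sending $[X]$ to its Witt class. The Witt class is additive and is killed by boundaries (if $X=\partial W$ then $I^{\bar m}H_m(W;\Z_2)$ maps to a Lagrangian, so the form on $X$ is split), so $w$ descends to a homomorphism on bordism classes, just as in \cite{GBF21}; and $W(\Z_2)\cong\Z_2$ by \cite[Lemma IV.1.5]{MH}. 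The theorem then reduces to two claims: that $w$ is an isomorphism in even degrees, and that $\mc N_i^{K-\text{Witt}}=0$ for $i$ odd.

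For injectivity of $w$ and for the vanishing in odd degrees I would rerun Siegel's surgery-and-cone argument in the characteristic-$2$ form already set up in the discussion above: after a preliminary bordism to an irreducible space, if $w([X])=0$ then the middle form is split, hence carries a self-annihilating element by \cite[Lemma I.6.3]{MH}, on which one performs a singular surgery; iterating kills $I^{\bar m}H_m(X;\Z_2)$, and coning off yields a $K$-Witt null-bordism. In odd dimensions $2m+1$ there is no middle self-pairing: after surgering below the middle, $I^{\bar m}H_m$ and $I^{\bar m}H_{m+1}$ are dual, one surgers away $I^{\bar m}H_m$ (every class is available, as there is no self-intersection obstruction), the other dies by duality, and one cones off. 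The only place where the dimensions are too small for the representability needed in \cite[Lemma 2.2]{Si83} is dimension $2$, which I would handle directly as in the excerpt: a $2$-dimensional $K$-Witt space is bordant to a disjoint union $\amalg S_i$ of closed surfaces, where $w$ computes $\sum_i\dim_{\Z_2}H_1(S_i;\Z_2)\bmod 2\equiv\chi(\amalg S_i)\bmod 2$, which is precisely the unoriented surface bordism invariant; thus $w$ is an isomorphism in dimension $2$. Dropping orientations makes each of these steps no harder than in \cite{GBF21}.

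The genuinely new content is surjectivity in every even degree, which is where dropping orientations pays off and resolves the ambiguity that obstructs the oriented computation in dimensions $\equiv 2\bmod 4$. The point is that $w([\R P^2])\neq 0$, since $H_1(\R P^2;\Z_2)=\Z_2$ carries the form $\langle 1\rangle$, and likewise $w([\C P^{2k}])\neq 0$. Using that the middle form of a product of closed manifolds is, up to Witt equivalence, the tensor product, together with the ring structure of $W(\Z_2)$ in which $\langle 1\rangle\cdot\langle 1\rangle=\langle 1\rangle$, the classes $[\C P^{2k}]$ and $[\R P^2\times\C P^{2k}]$ give nonzero values of $w$ in dimensions $4k$ and $4k+2$. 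Alternatively $[(\R P^2)^{\times m}]$ works uniformly: its middle $\Z_2$-Betti number is the coefficient of $t^m$ in $(1+t+t^2)^m$, which one checks is odd, so its form has odd rank and is Witt-nontrivial. Combined with injectivity this yields $w\colon\mc N_{2m}^{K-\text{Witt}}\xrightarrow{\ \cong\ }\Z_2$.

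The main obstacle I anticipate is not any single step but confirming that the characteristic-$2$ surgery argument goes through cleanly for the even-dimensional null-bordisms of dimension $\geq 4$: that a split $\Z_2$-form always supplies a self-annihilating class on which a singular surgery can be performed and that the trace remains $K$-Witt. This is exactly the place where the $\begin{pmatrix}0&I\\I&A\end{pmatrix}$ subtlety flagged in the excerpt must be controlled, though as explained there it suffices to produce a basis with $\alpha\cdot\alpha=\alpha\cdot\gamma_i=0$ and $\alpha\cdot\beta=1$, which a split form provides. Once that is secured, surjectivity via $\R P^2$ is immediate and the odd-degree vanishing is formal, completing the computation.
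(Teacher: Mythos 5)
Your proposal matches the paper's proof in all the essential even-dimensional steps: reduce to $K=\Z_2$, show that $w\colon \mc N_{2k}^{\Z_2-\text{Witt}}\to W(\Z_2)\cong\Z_2$ is injective for $2k\geq 4$ by the characteristic-$2$ adaptation of Siegel's surgery argument (split form yields an isotropic class; the $\begin{pmatrix}0&I\\I&A\end{pmatrix}$ subtlety is handled exactly as in the footnote you cite), treat dimension $2$ by pinch-bordism to a disjoint union of closed surfaces, and obtain surjectivity from real projective spaces. Two differences are worth recording. For surjectivity the paper simply takes $\R P^{2k}$ in dimension $2k$, whose middle self-pairing is $\langle 1\rangle$; your alternatives $\R P^2\times \C P^{2k}$ and $(\R P^2)^{\times m}$ also work — your parity claim about the central coefficient of $(1+t+t^2)^m$ is correct, and rank mod $2$ is indeed multiplicative because the off-middle K\"unneth terms cancel in pairs by duality — but this route needs the K\"unneth and multiplicativity discussion that the paper's choice avoids entirely.

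The more substantive deviation is your odd-dimensional argument, which is over-engineered and, as written, has unverified steps: you propose preliminary surgeries below the middle dimension together with a duality argument before coning, but Siegel's representability lemma \cite[Lemma 2.2]{Si83} concerns middle-dimensional classes of even-dimensional spaces, and you do not check representability of the below-middle classes or the Wittness of the traces in this setting. None of this is needed. If $\dim X$ is odd, the closed cone $\bar cX$ is \emph{automatically} a $\Z_2$-Witt space with no hypothesis on $I^{\bar m}H_*(X;\Z_2)$: the only new stratum is the cone point, whose link is the odd-dimensional $X$ itself, and the Witt condition constrains only even-dimensional links. That one-line observation is the paper's entire proof that $\mc N_n^{\Z_2-\text{Witt}}=0$ for $n$ odd (the vanishing-middle-homology requirement you are implicitly importing arises only when coning \emph{even}-dimensional spaces). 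Replacing your surgery program with this observation, the rest of your proof goes through and agrees with the paper's.
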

Since writing \cite{GBF21}, the author has discovered that this theorem is also provided without detailed proof by Goresky in \cite[page 498]{Go84}. We provide here  the details:
\begin{proof}
 It continues to hold that the local Witt condition depends only on the characteristic of $K$ for the reasons provided in \cite{GBF21}, so we may assume $K=\Z_2$.  To see that $\mc N_n^{\Z_2-\text{Witt}}=0$ for $n$ odd, we simply note that $X$ bounds the closed cone $\bar cX$, which is a $\Z_2$-Witt space. 
The map $w: \mc N_{2k}^{\Z_2-\text{Witt}}\to W(\Z_2)\cong \Z_2$ is onto for each $k>0$, as the intersection pairing  on the $\Z_2$-coefficient middle-dimensional homology of the real projective space $\R P^{2k}$ corresponds to the generator of $W(\Z_2)$ represented by the matrix $\langle 1\rangle$. 
Furthermore,  $w$ is injective for $k>1$ as in the preceding surgery argument, which does not rely on whether or not $X$ is oriented, only on the existence of the intersection pairing over $\Z_2$. In dimension $0$, we have unoriented manifold bordism of points, so $\mc N_0^{\Z_2-\text{Witt}}\cong\Z_2$. Finally, as in the argument above for $\Omega_{2}^{\Z_2-\text{Witt}}$,  the group $\mc N_{2}^{\Z_2-\text{Witt}}$ must be generated by closed surfaces (now not necessarily oriented), so  $\mc N_{2}^{\Z_2-\text{Witt}}$ is a quotient of the unoriented manifold bordism group $\mc N_{2}\cong \Z_2$; thus  $\mc N_{2}^{\Z_2-\text{Witt}}$ must be isomorphic to $\Z_2$ as $w$ maps $\R P^2$  onto the non-trivial element of  $W(\Z_2)\cong \Z_2$. 
\end{proof}

\begin{remark}
 An even simpler version of the argument of \cite{GBF21} implies that as a generalized homology theory
$$\mc N^{K-\text{Witt}}_n(X)\cong \bigoplus_{r+s=n}H_r(X;\mc N^{K-\text{Witt}}_s)$$ for $\char(K)=2$, as in this case one no longer needs a separate argument to handle the odd torsion that can arises in $H_n(X;\Omega^{K-\text{Witt}}_0)$ as a result of $\Omega^{K-\text{Witt}}_0\cong \Z$ not being $2$-primary.
\end{remark}

\paragraph{Further discussion of oriented bordism.}
We next provide some results that demonstrate the difficulty of determining which case of item \eqref{I} of Theorem \ref{T} holds. 

We will first see that $w([M])=0$ for any $\Z$-oriented manifold: Since dimension mod $2$ is the only invariant\footnote{As observed in the proof of \cite[Lemma III.3.3]{MH}, rank mod $2$ yields a homomorphism $W(F)\to \Z_2$ for any field $F$. Since we know that $W(\Z_2)\cong \Z_2$ and that $\langle 1\rangle$, which has rank $1$, is a generator of $W(F)$ (it is certainly non-zero, using \cite[Lemma I.6.3 and Lemma III.1.6]{MH}), it follows that rank mod $2$ determines the isomorphism.  } of $W(\Z_2)$, this is a consequence  of the following lemma, recalling that for a manifold, $I^{\bar m}H_*(M)=H_*(M)$. 

\begin{lemma}
Let $M$ be a closed connected $\Z$-oriented manifold of dimension $4k+2$. Then $\dim(H_{2k+1}(M;\Z_2))\equiv 0\mod 2$.
\end{lemma}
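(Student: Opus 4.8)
The plan is to separate the middle $\Z_2$-Betti number into a free (rational) contribution and a $2$-torsion contribution, and to control each piece independently. By the universal coefficient theorem,
$$\dim H_{2k+1}(M;\Z_2) = \dim\big(H_{2k+1}(M;\Z)\otimes \Z_2\big) + \dim \text{Tor}\big(H_{2k}(M;\Z),\Z_2\big).$$
Writing $b_{2k+1}$ for the rational Betti number and $t_j$ for the number of even-order cyclic direct summands of $H_j(M;\Z)$ (equivalently, the $2$-rank of its torsion subgroup), this gives
$$\dim H_{2k+1}(M;\Z_2) = b_{2k+1} + t_{2k+1} + t_{2k}.$$
It therefore suffices to show that $b_{2k+1}$ is even and that $t_{2k+1}+t_{2k}$ is even.

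First I would dispose of the free part. Since $M$ is closed, connected, and $\Z$-oriented of dimension $4k+2$, rational Poincar\'e duality makes the cup-product pairing on $H^{2k+1}(M;\Q)$ nondegenerate, and because the middle degree $2k+1$ is odd, graded commutativity makes this pairing skew-symmetric. A nondegenerate alternating form over a field of characteristic $\neq 2$ can exist only on an even-dimensional space, so $b_{2k+1}=\dim H^{2k+1}(M;\Q)$ is even. This is precisely the input that genuine $\Z$-orientation (rather than mere $\Z_2$-orientation) provides.

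The main work is the torsion term. Here I would combine integral Poincar\'e duality with the universal coefficient theorem to identify torsion subgroups in complementary degrees: for a closed $\Z$-oriented $n$-manifold the torsion of $H_j(M;\Z)$ is isomorphic to the torsion of $H_{n-j-1}(M;\Z)$, the degree shift coming from the $\Ext$ term. Taking $n=4k+2$ and $j=2k$ yields an isomorphism between the torsion of $H_{2k}(M;\Z)$ and that of $H_{2k+1}(M;\Z)$, hence $t_{2k}=t_{2k+1}$ and $t_{2k+1}+t_{2k}=2t_{2k}\equiv 0\mod 2$. Combined with the previous paragraph this gives $\dim H_{2k+1}(M;\Z_2)=b_{2k+1}+2t_{2k}\equiv 0\mod 2$.

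The step I expect to require the most care is the torsion bookkeeping: one must correctly match the $2$-ranks of torsion groups under the duality isomorphism (keeping track of the degree shift) and confirm that it is genuinely the $2$-primary part whose rank simultaneously controls the $\otimes\,\Z_2$ and $\text{Tor}(\,\cdot\,,\Z_2)$ contributions. Note that $\Z$-orientation is used twice — once to obtain a genuinely skew (not merely symmetric) rational intersection form, and once to have integral Poincar\'e duality available — which is exactly what distinguishes this case from low-dimensional non-orientable examples such as $\R P^2$, where the middle $\Z_2$-Betti number is odd.
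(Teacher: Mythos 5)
Your proof is correct and follows essentially the same route as the paper: decompose $\dim H_{2k+1}(M;\Z_2)$ via the universal coefficient theorem into the Betti number plus the $2$-ranks of the torsion in degrees $2k$ and $2k+1$, kill the parity of the Betti number with the nondegenerate skew-symmetric rational intersection form, and match the two torsion ranks by duality. The only cosmetic difference is that the paper obtains $T_{2k+1}(M)\cong T_{2k}(M)$ from the nonsingular linking pairing $T_{2k+1}(M)\otimes T_{2k}(M)\to \Q/\Z$, while you derive the same isomorphism directly from integral Poincar\'e duality plus the $\Ext$ term of the universal coefficient theorem --- these are the same fact in two guises.
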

\begin{proof}
By the universal coefficient theorem, $$H_{2k+1}(M;\Z_2)\cong \left(H_{2k+1}(M)\otimes \Z_2\right)\oplus\left( H_{2k}(M)*\Z_2\right),$$ where the asterisk denotes the torsion product.  Let $T_*(M)$ denote the torsion subgroup of $H_*(M)$, and let  $T^2_*(M)$ denote $T_*(M)\otimes \Z_2\cong T_*(M)* \Z_2$; the isomorphism follows from basic homological algebra because $T_*(M)$ is a finite abelian group. $T^2_*(M)$ is a direct sum of $\Z_2$ terms. Then $H_{2k+1}(M)\otimes \Z_2\cong \Z_2^{B} \oplus T^2_{2k+1}(M)$, where $B$ is the $2k+1$ Betti number of $M$, and $H_{2k}(M)*\Z_2\cong T^2_{2k}(M)$. Thus $H_{2k+1}(M;\Z_2)\cong  \Z_2^{B} \oplus T^2_{2k+1}(M)\oplus T^2_{2k}(M)$.
 Since $M$ is a closed $\Z$-oriented manifold, there is a nondegenerate skew-symmetric intersection form on $H_{2k+1}(M;\Q)$, and so $B$ is even.  Since $M$ is a closed $\Z$-oriented manifold, the nonsingular linking pairing $T_{2k+1}(M)\otimes T_{2k}(M)\to \Q/\Z$ gives rise to an isomorphism $T_{2k+1}(M)\cong \Hom(T_{2k}(M),\Q/\Z)$, and since $\Hom(\Z_n,\Q/\Z)\cong \Z_n$, it follows that  $T_{2k+1}(M)\cong T_{2k}(M)$. Therefore $T^2_{2k+1}(M)\cong T^2_{2k}(M)$. Thus $H_{2k+1}(M;\Z/2)$ consists of an even number of $\Z_2$ terms. 
\end{proof}

\begin{remark}
Since the lemma utilizes only integral Poincar\'e duality and the universal coefficient theorem, it follows that, in fact, $w([X])=0$ for any $IP$ space\footnote{Also called ``intersection homology Poincar\'e spaces,'' though this is perhaps a misnomer as ``Poincar\'e spaces'' are generally not required to be manifolds while IP spaces are still expected to be pseudomanifolds.}; these are spaces that satisfy  local conditions guaranteeing that intersection homology Poincar\'e duality holds over the integers and that a universal coefficient theorem holds (see \cite{GS83, Pa90}). 
\end{remark}

A slightly more elaborate argument demonstrates that it is also not possible to have $w([X])\neq 0$ if $X$ is a $\Z$-oriented $\Z_2$-Witt space with at worst isolated singularities:

\begin{proposition}
Let $X$ be a closed $\Z$-oriented $4k+2$-dimensional $\Z_2$-Witt space with at worst isolated singularities. Then $w([X])=0$. 
\end{proposition}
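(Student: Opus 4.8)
The plan is to reduce the statement, via the cone formula for intersection homology, to a parity statement about the manifold-with-boundary underlying $X$, and then to extract that parity from integral duality in the spirit of the Lemma, the one genuinely new feature being the torsion carried by the links. Since $W(\Z_2)\cong\Z_2$ is detected by rank mod $2$, it suffices to show that $\dim_{\Z_2}I^{\bar m}H_{2k+1}(X;\Z_2)$ is even. First I would write $X=W\cup(\text{cones})$, where $W$ is the complement of open cone neighborhoods of the finitely many singular points; thus $W$ is a compact $\Z$-oriented $(4k+2)$-manifold with $\partial W=\amalg_i L_i$, each $L_i$ a closed oriented $(4k+1)$-manifold. (The Witt condition is automatic here since the links are odd-dimensional, so $X$ is in fact a $K$-Witt space for every $K$.) Combining the cone formula $I^{\bar m}H_j(c^\circ L)\cong H_j(L)$ for $j\le 2k$ and $=0$ for $j\ge 2k+1$ with a Mayer--Vietoris sequence, one obtains, over any coefficients,
\[
I^{\bar m}H_{2k+1}(X)\cong \operatorname{im}\bigl(j_*\colon H_{2k+1}(W)\to H_{2k+1}(W,\partial W)\bigr),\qquad I^{\bar m}H_{2k}(X;\Z)\cong H_{2k}(W).
\]

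Next I would split the $\Z_2$-dimension through the integral universal coefficient theorem (valid because intersection chains with $\Z$ coefficients form a complex of free abelian groups): $\dim_{\Z_2}I^{\bar m}H_{2k+1}(X;\Z_2)=r+d_{2k+1}+d_{2k}$, where $r$ is the free rank of $I^{\bar m}H_{2k+1}(X;\Z)$, $d_{2k+1}$ is the $\Z_2$-rank of its torsion subgroup, and $d_{2k}$ is the $\Z_2$-rank of the torsion subgroup $TH_{2k}(W)$ of $H_{2k}(W)$. The free rank $r$ is even: since $X$ is $\Q$-Witt, $I^{\bar m}H_{2k+1}(X;\Q)$ carries a nonsingular skew-symmetric intersection form, which over $\Q$ forces even dimension. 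So the whole statement comes down to showing $d_{2k+1}+d_{2k}\equiv 0\pmod 2$.

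This torsion comparison is the \emph{crux}, and it is precisely where $X$ may fail to be an $IP$ space. Lefschetz duality for the pair $(W,\partial W)$ provides a nonsingular linking pairing identifying $TH_{2k}(W)\cong TH_{2k+1}(W,\partial W)$, while the description $I^{\bar m}H_{2k+1}(X;\Z)\cong\ker\bigl(\partial\colon H_{2k+1}(W,\partial W)\to H_{2k}(\partial W)\bigr)$ identifies the torsion of the left-hand side with the restriction of $\ker(\partial)$ to $TH_{2k+1}(W,\partial W)$. When the links are torsion-free the connecting map $\partial$ vanishes, the two torsion groups coincide, and $d_{2k+1}=d_{2k}$; this recovers the $IP$ case already covered by the preceding Remark. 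In general the two groups differ by $\operatorname{im}(\partial)\subseteq TH_{2k}(\partial W)=\bigoplus_i TH_{2k}(L_i)$, the torsion of the links, and the difficulty is that $\Z_2$-rank is not additive across the resulting exact sequence, so matching the two parities requires real input.

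The main obstacle is therefore to show that this link-torsion discrepancy contributes evenly, and I would attack it in one of two essentially equivalent ways. The first uses that $\operatorname{im}(\partial)$ is a metabolizer for the nonsingular linking self-pairing on $TH_{2k}(\partial W)$ (a half-lives-half-dies statement for the boundary linking form) and tracks how this constrains the parity. The second, cleaner, route bypasses torsion bookkeeping altogether: the parity of the rank of a nonsingular symmetric $\Z_2$-form equals the self-pairing $b(c,c)$ of its characteristic (Wu) element $c$, so $\dim_{\Z_2}I^{\bar m}H_{2k+1}(X;\Z_2)\equiv\langle v_{2k+1}^2,[W,\partial W]\rangle\pmod 2$; gluing two fillings of $\partial W$ along the boundary and using additivity of Wu numbers reduces the vanishing to the closed case, where the Lemma applies directly to the double. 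Either way the parity is a bordism invariant of $\partial W$ (a de Rham/semicharacteristic-type invariant) that vanishes because $\partial W$ bounds $W$; this is exactly where the $\Z$-orientation is indispensable, as $\R P^{4k+2}$ shows that $\Z_2$-duality alone permits an odd-dimensional middle intersection homology.
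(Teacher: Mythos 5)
Your reductions are sound and track the paper's setup: passing to the compact $\bd$-manifold $W$, identifying $I^{\bar m}H_{2k+1}(X;\Z_2)$ with the image form $\im(H_{2k+1}(W;\Z_2)\to H_{2k+1}(W,\bd W;\Z_2))$, noting that rank mod $2$ detects $W(\Z_2)$, and killing the free rank via the rational skew-symmetric form are all correct. But the crux you yourself isolate, $d_{2k+1}+d_{2k}\equiv 0 \pmod 2$, is never proved, and neither proposed completion works as described. The doubling route is logically inert: any additivity of middle-dimensional ranks (or Wu numbers) for $Z=W\cup_{\bd W}W$ counts the filling's parity twice, so the Lemma applied to the double yields only $2\dim V\equiv 0$, a tautology. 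Concretely, already for $k=0$ the M\"obius band $W$ has $\dim\im(H_1(W;\Z_2)\to H_1(W,\bd W;\Z_2))=1$, odd, while its double, the Klein bottle, has $\dim H_1(\,\cdot\,;\Z_2)=2$, even: evenness for the closed double cannot detect the parity of the image form. The same example shows your closing sentence is circular: every $\bd W$ tautologically bounds $W$, yet the parity can be odd, so the parity is \emph{not} an invariant of $\bd W$ alone --- it depends on the filling, precisely on whether the filling is orientable. To make this route honest you would have to identify the parity defect as a characteristic-class correction term (a de Rham/Wu-type invariant in the spirit of Lusztig--Milnor--Peterson, relating the semicharacteristic of $\bd W$ to $\chi(W)$ plus a Wu number of $W$, where moreover $\langle v_{2k+1}^2,[W,\bd W]\rangle$ is not even defined without choosing a relative lift of $v_{2k+1}$) and then prove it vanishes for $\Z$-oriented $W$; that is exactly the ``real input'' you flag as needed, and it is absent. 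Route (1) has the same status: $\im(\bd)$ is indeed a metabolizer of the linking form of $\bd W$ when $W$ is oriented, but metabolizers do not control $\Z_2$-ranks mod $2$ (a $\Z_4$ summand with metabolizer $\Z_2$ already defeats naive parity bookkeeping), which is the non-additivity you concede.

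The paper's proof sidesteps all of this by proving something stronger and purely local: \emph{every} class in the image form has vanishing self-intersection, so the form is alternating, hence split, hence $w([X])=0$. Dually, for $\alpha\in H^{2k+1}(M,\bd M;\Z_2)$ one computes $\alpha\cup\alpha=Sq^{2k+1}\alpha=Sq^1Sq^{2k}\alpha=\beta^*Sq^{2k}\alpha$, and the Bockstein $\beta^*\colon H^{4k+1}(M,\bd M;\Z_2)\to H^{4k+2}(M,\bd M;\Z_2)$ vanishes because Poincar\'e--Lefschetz duality carries it to $\beta_*\colon H_1(M;\Z_2)\to H_0(M;\Z_2)$, which is zero since $\times 2\colon H_0(M;\Z_2)\to H_0(M;\Z_4)$ is injective. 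The $\Z$-orientation enters exactly once --- in making the duality square with the Bocksteins commute (one needs a $\Z_4$-fundamental class; cf.\ \cite[Lemma 69.2]{MUNK}) --- and this is precisely where your M\"obius/$\R P^{4k+2}$ obstruction lives. Note that your characteristic-element observation is the right bridge between the two approaches: rank mod $2$ equals $b(c,c)$ for the characteristic element $c$, and the paper in effect shows $c=0$, which is both stronger and far easier to establish than the bordism-theoretic vanishing of $b(c,c)$ you sketch.
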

\begin{proof}
Since $X$ has at worst point singularities, it follows from basic intersection homology calculations (see \cite[Section 6.1]{GM1}) that $I^{\bar m}H_{2k+1}(X;\Z_2)\cong \im(H_{2k+1}(M;\Z_2)\to H_{2k+1}(M,\bd M;\Z_2))$, where $M$ is the  compact $\Z$-oriented  PL $\bd$-manifold  obtained by removing an open regular neighborhood of the singular set of $X$. 
 We will show that if $[z]\in\im(H_{2k+1}(M;\Z_2)\to H_{2k+1}(M,\bd M;\Z_2))$, then the intersection product $[z]\cdot [z]=0$. It follows that the intersection pairing on $I^{\bar m}H_{2k+1}(X;\Z_2)$ is split by \cite[Lemma III.1.1]{MH}, since then there can be no non-trivial anisotropic subspace. This implies that $w([X])=0$ by the definition of the Witt group. 

The following argument that $[z]\cdot [z]=0$ was suggested by ``Martin O'' on the web site MathOverflow \cite{MO11}. By Poincar\'e duality, it suffices to show that $\alpha\cup\alpha=0$, where $\alpha$ is the Poincar\'e dual of $[z]$ in $H^{2k+1}(M,\bd M;\Z_2)$. But now $\alpha\cup\alpha=Sq^{2k+1}\alpha=Sq^1Sq^{2k}\alpha=\beta^* Sq^{2k}\alpha$, where $\beta^*$ is the Bockstein associated with  the sequence $0\to \Z_2\to \Z_4\to \Z_2\to 0$ (see \cite[Section 4.L]{Ha}). In the case at hand, this is the Bockstein $\beta^*: H^{4k+1}(M,\bd M;\Z_2)\to H^{4k+2}(M,\bd M;\Z_2)$. But this map is trivial. To see this, observe that there is a commutative diagram 
\begin{diagram}
H^{4k+1}(M,\bd M;\Z_2)&\rTo^{\beta^*}& H^{4k+2}(M,\bd M;\Z_2)\\
\dTo_\cong&&\dTo_\cong\\
H_1(M;\Z_2)&\rTo^{\beta_*}&H_{0}(M;\Z_2),
\end{diagram}
where $\beta_*$ is the homology Bockstein and the vertical maps are Poincar\'e duality. The existence of this diagram follows as in \cite[Lemma 69.2]{MUNK}.  But now $\beta_*:H_1(M;\Z_2)\to H_{0}(M;\Z_2)$ is trivial, as the standard map $\times 2:H_0(M;\Z_2)\to H_0(M;\Z_4)$ is injective. 
\end{proof}

Hence any candidate to have $w([X])=1$ must have singular set of dimension $>0$ and must not be an IP space. Given that all $K$-Witt spaces for $\char(K)\neq 2$ are $K$-Witt bordant to spaces with at worst isolated singularities \cite{Si83, GBF21}, it is unclear how to proceed to determine whether  $\Z_2$-Witt spaces with $w([X])=1$  exist. One method to prove that they do not would be to try to show ``by hand'' that every $\Z_2$-Witt space is $\Z_2$-Witt bordant to a space with at most isolated singularities, but the only  proof currently known to the author of this fact  for fields of other characteristics utilizes the bordism computations of \cite{Si83, GBF21}.

\providecommand{\bysame}{\leavevmode\hbox to3em{\hrulefill}\thinspace}
\providecommand{\MR}{\relax\ifhmode\unskip\space\fi MR }
% \MRhref is called by the amsart/book/proc definition of \MR.
\providecommand{\MRhref}[2]{%
  \href{http://www.ams.org/mathscinet-getitem?mr=#1}{#2}
}
\providecommand{\href}[2]{#2}


\begin{thebibliography}{10}

\bibitem{GBF21}
Greg Friedman, \emph{Intersection homology with field coefficients: {K}-{Witt}
  spaces and {K}-{Witt} bordism}, Comm. Pure Appl. Math. \textbf{62} (2009),
  1265--1292.

\bibitem{GM1}
Mark Goresky and Robert MacPherson, \emph{Intersection homology theory},
  Topology \textbf{19} (1980), 135--162.

\bibitem{GS83}
Mark Goresky and Paul Siegel, \emph{Linking pairings on singular spaces},
  Comment. Math. Helvetici \textbf{58} (1983), 96--110.

\bibitem{Go84}
R.~Mark Goresky, \emph{Intersection homology operations}, Comment. Math. Helv.
  \textbf{59} (1984), no.~3, 485--505.

\bibitem{Ha}
Allen Hatcher, \emph{Algebraic topology}, Cambridge University Press,
  Cambridge, 2002.

\bibitem{Ki}
Henry~C. King, \emph{Topological invariance of intersection homology without
  sheaves}, Topology Appl. \textbf{20} (1985), 149--160.

\bibitem{MH}
J.~Milnor and D.~Husemoller, \emph{Symmetric bilinear forms}, Springer Verlag,
  New York, 1973.

\bibitem{MUNK}
James~R. Munkres, \emph{Elements of algebraic topology}, Addison-Wesley,
  Reading, MA, 1984.

\bibitem{MO11}
Martin O, see http://http://mathoverflow.net/questions/53419/.

\bibitem{Pa90}
William~L. Pardon, \emph{Intersection homology {P}oincar\'e spaces and the
  characteristic variety theorem}, Comment. Math. Helvetici \textbf{65} (1990),
  198--233.

\bibitem{Si83}
P.H. Siegel, \emph{{Witt} spaces: a geometric cycle theory for {KO}-homology at
  odd primes}, American J. Math. \textbf{110} (1934), 571--92.

\end{thebibliography}
\end{document}